%% file: main.tex
\documentclass[letterpaper, 10 pt, conference]{ieeeconf}

\IEEEoverridecommandlockouts
\usepackage{cite}

\usepackage{amsmath,amsthm,amssymb,amsfonts}
\allowdisplaybreaks
\usepackage{algorithmic}
\usepackage{graphicx}
\usepackage{textcomp}
\usepackage{xcolor}
\usepackage{bbm}
\usepackage{balance}
\usepackage{bm}
\usepackage{algorithmic}
\usepackage{algorithm}
\usepackage[caption=false, font=footnotesize]{subfig}
\newtheorem{theorem}{Theorem}
\newtheorem{definition}{Definition}

\newtheorem{assumption}{Assumption}
\newtheorem{proposition}{Proposition}
\newtheorem{remark}{Remark}

\ifodd 0

\newcommand{\wjcom}[1]{\textbf{\color{white!50!blue} (Wenjie comment: #1)}} %comment of the text
\else

\newcommand{\wjcom}[1]{}

\fi

\def\BibTeX{{\rm B\kern-.05em{\sc i\kern-.025em b}\kern-.08em
    T\kern-.1667em\lower.7ex\hbox{E}\kern-.125emX}}

\definecolor{lightgreen}{RGB}{225, 245, 230}
\begin{document}

% \title{Consensus-Based Distributed Feedback Optimization with Gradient Estimation\\
%On Distributed Constrained Feedback Optimization: A Projected Model-Free Method

\title{\LARGE \bf Robust Error Bounds for Vector-Valued Kernel Ridge Regression
\thanks{
The authors are with Automatic Control Laboratory, EPFL, Switzerland. Email: wenbin.wang@epfl.ch, colin.jones@epfl.ch. This work was supported by the Swiss Federal Office of Energy SFOE as part of the SWEET consortium SWICE.}
}
%New command
\newcommand{\obj}{\Tilde{\Phi}}
\newcommand{\gest}{\Tilde{\nabla}}
\newcommand{\tr}[1]{\text{tr}(#1)}
\newcommand{\E}[1]{\bb{E}[#1]}
\newcommand{\Ef}[2]{\bb{E}[#1|\mathcal{F}_{#2}]}
\newcommand{\N}{\frac{1}{N}\mathbf{1}\mathbf{1}^{\top}}
\newcommand{\inff}[1]{#1_{\infty}}
\newcommand{\bb}[1]{\mathbb{#1}}
\newcommand{\norm}[1]{\|#1\|}
\newcommand{\innerp}[3]{\langle #2, #3 \rangle_{\mathcal{#1}}}

\author{Wenbin Wang, Colin N. Jones}

\maketitle

\input{section/0_abstract}
\input{section/1_intro}
\input{section/2_preliminary}
\input{section/3_error_bounds}
% \input{section/4_efficient_computation}
\input{section/5_numerical_result}
\input{section/6_conclusion}

\balance
\bibliographystyle{IEEEtran}
\bibliography{references}

\end{document}

%% file: section/0_abstract.tex
\begin{abstract}
While kernel-based learning methods for scalar-valued functions have been extensively studied, comparatively little work considers nonlinear vector-valued operators. Such operators with vector-valued outputs, e.g., state vectors and trajectories, commonly arise in system identification problems. In this paper, we study the problem of learning a vector-valued operator under the framework of vector-valued kernel ridge regression. We assume that the outputs take values in an arbitrary Hilbert space and are corrupted by additive noise with bounded norm. Under the assumption that the unknown operator belongs to a vector-valued reproducing kernel Hilbert space, we derive a pointwise deterministic bound on the discrepancy between the kernel ridge regression estimator and the ground truth in the corresponding Hilbert space norm. Numerical simulations under two different scenarios are presented to validate the theoretical results.
\end{abstract}

%% file: section/1_intro.tex
\section{Introduction}
Kernel-based learning methods for system identification have attracted significant attention in recent years. Classical approaches typically rely on parametric model structures, e.g., AutoRegressive with eXogenous input (ARX) model, whose parameters are estimated via basic methods such as the least-squares method \cite{pillonetto2014kernel}. While these approaches are simple to implement and computationally efficient, they suffer from several limitations, including sensitivity to model order selection and high estimation variance under poor excitation conditions \cite{10266828}. In contrast, kernel-based methods adopt a nonparametric framework, where the system dynamics are learned in a high-dimensional (possibly infinite-dimensional) function space and the model complexity is managed through regularization. These methods have demonstrated strong performance in both linear \cite{pillonetto2024kernel,dinuzzo2015kernels} and nonlinear system identification \cite{lian2021nonlinear,lian2020gaussian}. Moreover, kernel-based regularization provides principled and tight uncertainty quantification for the identified dynamics \cite{scampicchio2025gaussian} when interpreted from a Bayesian perspective. This capability has made it particularly attractive for learning-based control, including robust control \cite{7330913} and model predictive control \cite{scampicchio2025gaussian}.

However, existing kernel-based system identification methods primarily focus on learning scalar-valued functions~\cite{lian2020gaussian,scampicchio2025gaussian}, which limits their ability to handle multi-input multi-output (MIMO) systems.
A common alternative is to perform identification separately for each output dimension~\cite{scampicchio2025gaussian}, which implicitly assumes independence among the output dimensions. However, this assumption is rarely satisfied in practice. In many MIMO systems, such as power systems~\cite{zhao2015distributed} and networked dynamical systems~\cite{nedic2018network}, outputs are coupled through underlying physical interactions. Ignoring these couplings discards shared information, leading to data-inefficient learning and degraded estimation performance. 

Moreover, this framework cannot accommodate cases where the outputs are high-dimensional vectors or even infinite-dimensional objects. For systems governed by partial differential equations (PDEs), measurements often take the form of spatially distributed fields~\cite{li2020fourier}, such as temperature, pressure, or velocity profiles. Here, the output resides in an infinite-dimensional function space, and classical scalar-valued kernels are fundamentally inadequate for representing the corresponding input–output mappings. These considerations motivate an extension of the classical scalar-valued framework to the vector-valued setting. Such an extension enables the joint learning of coupled outputs and provides a principled framework for modeling structured, potentially infinite-dimensional system responses.

Problems involving general vector-valued or function-valued outputs are commonly referred to as functional data analysis (FDA) \cite{gertheiss2024functional,kadri2010nonlinear} in the statistics community. Within this framework, various forms of functional regression have been studied, including scalar-on-function \cite{yuan2010reproducing}, function-on-scalar \cite{chen2016variable}, and function-on-function regression paradigms \cite{lian2020gaussian, bevanda2023koopman}. Some of the popular nonlinear system identification methods, such as the Koopman operator, fall in the last category. In the machine learning community, a closely related problem is often referred to as the multi-task learning problem \cite{ruiz2024survey}, where the objective is to learn multiple scalar-valued functions simultaneously while exploiting potential correlations among them. This formulation has been the subject of extensive theoretical analysis, including results on convergence rates \cite{caponnetto2007optimal} and algorithmic stability of multi-task kernel regression \cite{audiffren2013stability}. These theoretical developments have enabled practical applications such as multi-fidelity modeling \cite{park2017remarks}, transfer learning across outputs \cite{pan2009survey}, and learning from heterotopically sampled data \cite{liu2018remarks}. While most works on vector-valued regression focus on minimizing the risk functional and deriving the corresponding estimator, few provide theoretical analysis of the estimation error \cite{kadri2009general}. 

In this work, we consider the problem of learning a vector-valued operator that maps from a compact input space to a separable Hilbert space. The observations are elements of the corresponding Hilbert space with additive bounded-norm noise that resides in the same space. We assume that the unknown operator belongs to a vector-valued reproducing kernel Hilbert space (RKHS) with a known norm bound. The estimator that minimizes the regularized risk functional is obtained by solving a vector-valued kernel ridge regression problem. For each input, we derive an explicit upper bound on the normed distance between the model prediction and the ground truth. The proposed theoretical results are validated through numerical experiments on operators with two types of output spaces: a two-dimensional Euclidean space, and an infinite-dimensional scalar-valued RKHS.

% Some theoretical work emphasizes asymptotic behavior, establishing optimality or consistency only in the limit of an infinite number of samples. However, such assumptions are rarely satisfied in many applications in control. 

%% file: section/2_preliminary.tex
\section{preliminaries and problem formulation}
\subsection{Notation}
We denote the set of positive integers by $\mathbb{N} = \{1,2,\dots\}$. For $m\in\mathbb{N}$, we write $[m]:=\{1,\dots,m\}$. Let $\mathbb{R}^n$ be the $n$-dimensional Euclidean space. Given a vector $\theta\in\mathbb{R}^n$, its $i$-th entry is $\theta(i)$. We denote the vector-valued reproducing kernel Hilbert space (RKHS) by $\mathcal H$, which is equipped with the inner product $\langle \cdot,\cdot\rangle_{\mathcal{H}}$ and the norm $\|\cdot\|_{\mathcal{H}}$. Let $\mathcal{X}$ be the input space. A mapping $f : \mathcal X \to \mathcal Y$ is referred to as a vector-valued operator if, for every $x \in \mathcal X$, the output satisfies $f(x) \in \mathcal Y$, where $\mathcal Y$ is a Hilbert space endowed with an inner product
$\langle \cdot, \cdot \rangle_{\mathcal Y}$ and norm
$\|\cdot\|_{\mathcal Y}$. We denote an $n$-dimensional subspace of $\mathcal Y$ by $\mathcal Y_n$, and the direct sum of $m$ copies of $\mathcal Y$ by $\mathcal Y^m$. Furthermore, we denote the space of bounded linear operators mapping one Hilbert space $\mathcal{Y}_1$ to another Hilbert space $\mathcal{Y}_2$ by $\mathcal{L}(\mathcal{Y}_1,\mathcal{Y}_2)$. In the special case where $\mathcal{Y}_1 = \mathcal{Y}_2 = \mathcal{Y}$, we write $\mathcal L(\mathcal Y)$ for brevity. We also denote the cone of positive semidefinite (self-adjoint) bounded linear operators by $\mathcal L_{+}(\mathcal Y) \subset \mathcal L(\mathcal Y)$. For any bounded linear operator $T$ between two Hilbert spaces, we denote its adjoint by $T^*$.

\subsection{Vector-Valued RKHS}
Here, we adopt the definition of vector-valued RKHS from~\cite{micchelli2005learning}.

\begin{definition}
    $\mathcal{H}$ is a vector-valued reproducing kernel Hilbert space if for all $y\in\mathcal{Y}$ and $x \in \mathcal{X}$, the linear functional mapping $f$ to $\langle y,f(x)\rangle_{\mathcal{Y}}$ is continuous.
\end{definition}

\begin{remark}
    For a scalar-valued RKHS, the defining property is that the point evaluation functional
\[
f \mapsto f(x)
\]
is continuous. In the vector-valued setting, it is natural to pair $f(x)$ with an element $y \in \mathcal Y$ via the inner product, as it provides the canonical way to transform the vector-valued evaluation $f(x)$ into a scalar while preserving linearity. Such an operation is linear both in $f$ and $y$, which gives rise to operator-valued reproducing kernels.
\end{remark}
By the Riesz representation theorem \cite{akhiezer1981theory}, for every $x \in \mathcal X$ and $y \in \mathcal Y$, there exists a mapping $\Phi:\mathcal{X}\times\mathcal{Y}\rightarrow \mathcal{H}$ such that $\Phi(x,y)$ is the representation of the linear functional mapping $f$ to $\langle y,f(x)\rangle_{\mathcal{Y}}$ in $\mathcal{H}$. Specifically, for all $f \in \mathcal H$,
\begin{equation}
\label{eq: reproducing}
\langle \Phi(x,y),f\rangle_{\mathcal{H}} = \langle y,f(x)\rangle_{\mathcal{Y}}.
\end{equation}
Given that the functional is linear in $y$, the representer can be written as $\Phi(x,y) = \Phi(x)y$, where $\Phi(x) \in \mathcal L(\mathcal Y, \mathcal H)$. The reproducing property then takes the form
\[
\langle \Phi(x)y,f\rangle_{\mathcal{H}} = \langle y,f(x)\rangle_{\mathcal{Y}}.
\]

Since $\Phi(x) y \in \mathcal H$ is a vector-valued operator, its evaluation at any point
$t \in \mathcal X$ yields an element in $\mathcal Y$. Accordingly, for every $x,t \in \mathcal X$, we introduce the nonlinear mapping $K:\mathcal{X}\times\mathcal{X}\rightarrow\mathcal{L}(\mathcal{Y})$ defined by
\[
K(x,t)y = (\Phi(x)y)(t).
\]

\begin{proposition}
\label{prop_1}
Given the nonlinear mapping $K$ defined above, the following statements hold:
\begin{enumerate}
    \item For all $y,z \in \mathcal{Y}$, and all $x,t \in \mathcal X$, $\langle y, K(x,t)z\rangle_{\mathcal{Y}} = \langle \Phi(x)y, \Phi(t)z\rangle_{\mathcal{H}}$\label{prop_1_1},\\
    \item For all $x,t \in \mathcal X$, $K(x,t)\in \mathcal{L}(\mathcal{Y})$, $K(x,x)\in \mathcal{L}_{+}(\mathcal{Y})$ and $K(x,t) = K(t,x)^*$,
    \item For any $m \in \mathbb N$, any $y_i \in \mathcal Y$, and any $x_i \in \mathcal X$ for $i \in [m]$, it holds that $\sum_{i = 1,j = 1}^m\innerp{\mathcal{Y}}{y_i}{K(x_i,x_j)y_j}\geq0$.\label{prop_1_3}
\end{enumerate}
\end{proposition}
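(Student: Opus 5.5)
Everything follows from the reproducing property \eqref{eq: reproducing} together with the definition $K(x,t)y=(\Phi(x)y)(t)$. The plan is to prove item~\ref{prop_1_1} first and then derive the other two items from it.

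\textbf{Item~\ref{prop_1_1}.} Fix $y,z\in\mathcal Y$ and $x,t\in\mathcal X$. Apply the reproducing property at the point $t$ with test vector $y$ to the function $f=\Phi(x)z\in\mathcal H$. This gives
\[
\langle \Phi(t)y,\Phi(x)z\rangle_{\mathcal H}=\langle y,(\Phi(x)z)(t)\rangle_{\mathcal Y}=\langle y,K(x,t)z\rangle_{\mathcal Y}.
\]
The roles of $x$ and $t$ must be tracked carefully here. The order stated in item~\ref{prop_1_1} pairs $\Phi(x)y$ with $\Phi(t)z$, while the computation above produces $\Phi(t)y$ paired with $\Phi(x)z$. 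I would reconcile the two using the symmetry of the real inner product, or by relabeling. Alternatively, I would first prove $K(x,t)=K(t,x)^*$ and then rewrite the identity accordingly. I expect this bookkeeping of the argument order, that is, checking that the stated identity matches the definition of $K$, to be the only delicate point. If the convention turns out to be the transposed one, the adjoint relation in item 2 is exactly what converts one form into the other.

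\textbf{Item 2.} Linearity of $K(x,t)$ is inherited from the linearity of $\Phi(x)$ and of evaluation at $t$. For boundedness, use item~\ref{prop_1_1} and Cauchy--Schwarz:
\[
|\langle y,K(x,t)z\rangle_{\mathcal Y}|\le \|\Phi(x)\|\,\|\Phi(t)\|\,\|y\|_{\mathcal Y}\|z\|_{\mathcal Y}.
\]
Here $\Phi(x)$ is bounded because $\|\Phi(x)y\|_{\mathcal H}^2=\langle y,(\Phi(x)y)(x)\rangle_{\mathcal Y}$. To conclude boundedness of $\Phi(x)$ from this, I would invoke the closed graph theorem, or the uniform boundedness principle applied to the continuous functionals in the definition of the RKHS.

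For the adjoint relation, apply item~\ref{prop_1_1} twice and use the symmetry of the inner product:
\[
\langle y,K(x,t)z\rangle_{\mathcal Y}=\langle \Phi(t)z,\Phi(x)y\rangle_{\mathcal H}=\langle z,K(t,x)y\rangle_{\mathcal Y}=\langle K(t,x)y,z\rangle_{\mathcal Y}.
\]
This shows $K(x,t)=K(t,x)^*$. Setting $t=x$ then gives self-adjointness of $K(x,x)$. Positivity follows from $\langle y,K(x,x)y\rangle_{\mathcal Y}=\|\Phi(x)y\|_{\mathcal H}^2\ge0$.

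\textbf{Item~\ref{prop_1_3}.} By item~\ref{prop_1_1} and bilinearity,
\[
\sum_{i,j}\langle y_i,K(x_i,x_j)y_j\rangle_{\mathcal Y}=\Big\|\sum_{i=1}^m\Phi(x_i)y_i\Big\|_{\mathcal H}^2\ge0.
\]
This uses whichever index order item~\ref{prop_1_1} yields; the sum is symmetric in $(i,j)$ after applying the adjoint relation, so the order does not matter.
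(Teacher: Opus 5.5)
There is a genuine gap, and it sits exactly at the bookkeeping point you flagged: none of the reconciliations you propose works. From the printed definition $K(x,t)y=(\Phi(x)y)(t)$, your computation correctly gives $\langle y,K(x,t)z\rangle_{\mathcal Y}=\langle\Phi(t)y,\Phi(x)z\rangle_{\mathcal H}$. None of your three fixes turns this into the stated identity:
\begin{itemize}
\item Symmetry of the inner product only rewrites the right side as $\langle\Phi(x)z,\Phi(t)y\rangle_{\mathcal H}$, which still attaches $z$ to $x$.
\item Relabeling $x\leftrightarrow t$ (or $y\leftrightarrow z$ and then taking adjoints) does produce $\langle\Phi(x)y,\Phi(t)z\rangle_{\mathcal H}$, but with $K(t,x)$ on the left instead of $K(x,t)$.
\item The adjoint relation $K(x,t)=K(t,x)^*$ maps your identity to itself.
\end{itemize}
What you have proved is item~1 with $K(x,t)$ replaced by $K(t,x)=K(x,t)^*$. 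The two versions coincide only when every $K(x,t)$ is self-adjoint, as for the separable kernels $G(x,t)T$ with $T=T^*$ in the numerical section, but not in general.

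Your remark in item~3 that the order ``does not matter'' fails for the same reason. The sum $\sum_{i,j}\langle y_i,K(x_j,x_i)y_j\rangle_{\mathcal Y}$ equals $\|\sum_i\Phi(x_i)y_i\|_{\mathcal H}^2$. The stated sum $\sum_{i,j}\langle y_i,K(x_i,x_j)y_j\rangle_{\mathcal Y}$ is instead the quadratic form of the blockwise adjoint (a partial transpose) of that block operator, which need not be positive.

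Here is a concrete counterexample. Take $\mathcal X=\{a,b\}$, $\mathcal Y=\mathbb R^2$ with standard basis $e_1,e_2$, and $\mathcal H=\{f_c:c\in\mathbb R\}$ with $f_c(a)=ce_1$, $f_c(b)=ce_2$, $\|f_c\|_{\mathcal H}=|c|$. Then $\Phi(a)y=f_{y(1)}$ and $\Phi(b)y=f_{y(2)}$, so the printed definition gives $K(a,a)=e_1e_1^{\top}$, $K(a,b)=e_2e_1^{\top}$, $K(b,a)=e_1e_2^{\top}$, $K(b,b)=e_2e_2^{\top}$. For $x=a$, $t=b$, $y=e_2$, $z=e_1$, the left side of item~1 is $1$ while the right side is $\langle f_0,f_0\rangle_{\mathcal H}=0$. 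For $(x_1,x_2)=(a,b)$ and $(y_1,y_2)=(e_2,-e_1)$, the sum in item~3 is $-2$. Adding $\epsilon I$ with $0<\epsilon<1$ to $K(a,a)$ and $K(b,b)$ gives the kernel of a larger, normal RKHS. This leaves the first discrepancy intact and turns the second value into $-2+2\epsilon<0$.

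So items~1 and~3 cannot be derived from the definition as printed; what is missing is the right convention, not a symmetry argument. The paper gives no proof of its own here and defers to Micchelli and Pontil~\cite{micchelli2005learning}, who define $K(x,t)y:=(\Phi(t)y)(x)$. This is also the convention implicit in the paper's later formulas $\bar{s}(x)=\sum_iK(x,x_i)\bar{\alpha}_i$ and $\sum_jK(x_i,x_j)\bar{\alpha}_j=f(x_i)$.

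With that convention your argument goes through verbatim and is the standard one. The reproducing property at $x$ with test vector $y$, applied to $\Phi(t)z$, gives $\langle y,K(x,t)z\rangle_{\mathcal Y}=\langle\Phi(x)y,\Phi(t)z\rangle_{\mathcal H}$ at once, and item~3 is then $\|\sum_i\Phi(x_i)y_i\|_{\mathcal H}^2\ge0$.

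Your item~2 survives under either convention. Your adjoint computation as written uses the stated form of item~1, but it works equally well from the form you actually derived. Positivity and the closed-graph boundedness argument are unaffected.

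The fix is therefore either to swap the arguments in the definition of $K$, or to keep the printed definition and restate items~1 and~3 as $\langle y,K(x,t)z\rangle_{\mathcal Y}=\langle\Phi(t)y,\Phi(x)z\rangle_{\mathcal H}$ and $\sum_{i,j}\langle y_i,K(x_j,x_i)y_j\rangle_{\mathcal Y}\ge0$.
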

Proofs of the above statements, as well as additional properties of $K$, can be found in \cite{micchelli2005learning}. Such a mapping is called an operator-valued kernel if it satisfies the properties stated in Proposition~\ref{prop_1}~\cite{micchelli2005learning}. Consequently, by the Moore--Aronszajn theorem~\cite{aronszajn1950theory,micchelli2005learning}, there exists a unique RKHS for which $K$ is the reproducing kernel.

% Here, we provide only the proof of \ref{prop_1_3}). 
% \begin{proof}Using property \ref{prop_1_1}), we have
% \begin{align*}
%     \sum_{i = 1,j = 1}^m&\innerp{\mathcal{Y}}{y_i}{K(x_i,x_j)y_j} \\
%     &= \sum_{i = 1,j = 1}^m\innerp{\mathcal{H}}{K_{x_i}y_i}{K_{x_j}y_j}\\
%     &=\Big\langle\sum_{i = 1}^mK_{x_i}y_i,\sum_{i = 1}^mK_{x_i}y_i\Big\rangle_{\mathcal{H}}\\
%     & = \Big\|\sum_{i = 1}^mK_{x_i}y_i\Big\|_{\mathcal{H}}^2\\
%     &\geq 0 \qedhere
% \end{align*}
% \end{proof}

\begin{remark}
    Unlike the scalar-valued case, the reproducing kernel of a vector-valued RKHS takes values in the space of bounded linear operators that act on the output space. When $\mathcal Y = \mathbb{R}^n$, the operator-valued kernel $K(x,t)$ can be represented by an $n \times n$ matrix. In the special case where $n = 1$, the kernel reduces to a scalar, and the reproducing property~\eqref{eq: reproducing} coincides with the classical reproducing property for scalar-valued functions. Finite-dimensional matrix-valued kernels are also closely related to multi-output Gaussian processes, where the dependence between two inputs is characterized by a covariance matrix~\cite{williams2006gaussian}. In this setting, each matrix entry encodes the correlation between different output coordinates.
\end{remark}
Given an operator-valued kernel $K$, the associated vector-valued RKHS is obtained as the closure of $\operatorname{span}\{ \Phi(x) y \mid x \in \mathcal X,\; y \in \mathcal Y \}$. As in the scalar-valued case, this space enjoys a universal approximation property. If the kernel $K$ is universal, then the corresponding RKHS is dense in the space of continuous operators mapping from $\mathcal X$ to $\mathcal Y$. We refer to~\cite{caponnetto2008universal,carmeli2010vector} for precise statements and conditions.

\subsection{Problem Formulation}
Consider the problem of learning an unknown nonlinear vector-valued operator
$f : \mathcal X \to \mathcal Y$. We assume that, for each input $x \in \mathcal X$, the corresponding measurement is corrupted by additive noise, i.e., $y = f(x) + \delta$, where $\delta \in \mathcal Y$ lies in the same output space. Given a set of pairwise distinct inputs $X: = \{x_1, \dots, x_m\}$, we observe the outputs $\boldsymbol{y}: = \{y_1, \dots, y_m\}$, where $y_i = f(x_i) + \delta_i$. This yields the dataset $\mathcal D: = \{(x_i, y_i)\}_{i=1}^m$. To make the learning problem properly defined, we first impose the following assumptions.

\begin{assumption}
\label{assumption_1}
    The noise is assumed to be bounded in norm by a known constant $\bar{\delta}$, i.e., for all $i\in[m], \|\delta_i\|_{\mathcal{Y}}\leq\bar{\delta}$.
\end{assumption}

By imposing an upper bound on the noise norm, the resulting analysis does not depend on any specific probabilistic assumptions on the noise distribution. This leads to a robust analytical framework that remains valid under arbitrary bounded disturbances.

\begin{assumption}
\label{assumption_2}
    The ground-truth operator $f$ is assumed to belong to the vector-valued RKHS $\mathcal H$ associated with a bounded kernel $K$. Moreover, its RKHS norm is bounded by a known constant $\Gamma$, i.e., $\|f\|_{\mathcal H} \le \Gamma$.
\end{assumption}
\begin{remark}
    A bounded norm of a vector-valued operator implies boundedness of its scalar projections.
More precisely, if $f \in \mathcal H$ has bounded norm, then for all $x \in \mathcal X$ and all $y \in \mathcal Y$,
the scalar quantities $|\langle f(x), y \rangle_{\mathcal Y}|$ are bounded, since 
\[
|\langle f(x), y \rangle_{\mathcal Y}| = |\langle f, \Phi(x)y \rangle_{\mathcal H}|\leq\|f\|_{\mathcal{H}}\|\Phi(x)\|\|y\|_{\mathcal{Y}}.
\]
It follows that $\|f(x)\|_{\mathcal Y}$ is bounded over $x \in \mathcal X$ since  \[
\|f(x)\|_{\mathcal Y}
= \sup_{y} \frac{|\langle f(x), y \rangle_{\mathcal Y}|}{\|y\|_{\mathcal{Y}}}\leq\|f\|_{\mathcal{H}}\|\Phi(x)\|
\]
This mirrors the scalar-valued RKHS case, where a bounded RKHS norm implies pointwise boundedness of the function values.
\end{remark}

\begin{assumption}
    The vector-valued RKHS $\mathcal{H}$ is normal, i.e., $\forall x\in \mathcal{X}, \nexists y \neq 0$ such that $\Phi(x)y = 0$.
\end{assumption}
\begin{remark}
    A vector-valued RKHS is said to be \emph{normal} if $\Phi(x) y = 0$ implies $y = 0$ for all $x \in \mathcal X$.
This property implies that the operator-valued kernel $K(x,x)$ is strictly positive definite. Indeed, for any $y \neq 0$, $\langle y, K(x,x) y \rangle_{\mathcal Y}
= \langle \Phi(x) y, \Phi(x) y \rangle_{\mathcal H}
= \|\Phi(x) y\|_{\mathcal H}^2 > 0$. Consequently, $K(x,x)$ is a positive and injective operator on $\mathcal Y$, and for any $z \in \mathcal Y$ the equation $K(x,x) y = z$ admits at most one solution. This injectivity property plays an important role in guaranteeing a unique solution in problems such as interpolation in vector-valued RKHS.
\end{remark}

The goal is to find $s^*\in \mathcal{H}$ that best explains the dataset $\mathcal{D}$. A common approach is to minimize the regularized risk functional, leading to the unconstrained optimization problem known as kernel ridge regression (KRR)
\begin{equation}
\label{eq_KRR}
    s^* = \arg\min_{s\in\mathcal{H}}\sum_{i = 1}^m\|s(x_i)-y_i\|_{\mathcal{Y}}^2 + \lambda \|s\|_{\mathcal{H}}^2,
\end{equation}
where $\lambda > 0$ is the regularization parameter. This parameter controls the smoothness of the learned model, i.e., larger values of $\lambda$ enforce stronger penalization of the RKHS norm and thus yield smoother, less complex solutions. Unlike the scalar-valued case, the classical scalar-valued representer theorem does not directly apply with the vector-valued output. Additional care is therefore required in the analysis of vector-valued KRR. In the following, we develop the solution to this problem and explicitly quantify the prediction error.

%% file: section/3_error_bounds.tex
\section{Vector-valued KRR}
\subsection{Interpolation}
To better understand the solution of vector-valued KRR, we start with the minimum-norm interpolation problem in the noiseless setting, where the observations satisfy $y_i = f(x_i)$. The goal is to find an interpolant $\bar{s} \in \mathcal H$ with minimal RKHS norm such that
$\bar{s}(x_i) = f(x_i)$ for all $i \in [m]$. This leads to the optimization problem
\begin{equation}
\label{eq_objective}
\begin{aligned}
    \bar{s} = &\arg\min_{s\in\mathcal{H}}\|s\|_{\mathcal{H}}^2,\\
    \text{s.t.} \quad &s(x_i) = f(x_i), \quad\forall i \in [m].
\end{aligned}
\end{equation}

In contrast to the scalar-valued case, $s(x_i) \in \mathcal Y$ are vector-valued outputs and therefore cannot be directly expressed in terms of linear evaluation functionals. Hence, the classical scalar-valued representer theorem~\cite{scholkopf2001generalized} does not apply. 

Alternative formulations and solution characterizations for this vector-valued interpolation problem have been developed in~\cite{micchelli2005learning,kadri2016operator}. 
The solution admits the following form
\begin{equation}
\label{representer}
    \bar{s}(x) = \sum_{i = 1}^mK(x,x_i)\bar{\alpha}_i,
\end{equation}
where the coefficients $\bar{\alpha}_i \in \mathcal Y$ are determined by the linear system equations
\begin{equation}
\label{eq: interpolation}
    \sum_{j = 1}K(x_i,x_j)\bar{\alpha}_j = f(x_i), \quad\forall i \in [m].
\end{equation}

In general, this system constitutes a linear equation over a possibly infinite-dimensional Hilbert space, and is therefore not directly tractable.
However, by restricting attention to specific classes of operator-valued kernels, additional analytical and computational tools become available.

Specifically, define the bounded linear operator
$K_{XX}:\mathcal{Y}^m \to \mathcal{Y}^m$ by
\[
(K_{XX}\bm{f})_i \;=\; \sum_{j=1}^m K(x_i,x_j)\,f(x_j),\quad \forall i \in [m].
\]
where $\bm{f}=(f(x_1),\dots,f(x_m))\in\mathcal{Y}^m$ and
\[
\mathcal{Y}^m := \underbrace{\mathcal{Y}\oplus\cdots\oplus\mathcal{Y}}_{m\ \text{copies}}
\]
is the direct sum of $m$ copies of $\mathcal{Y}$. The boundedness of $K_{XX}$ is straightforward as each $K(x_i,x_j)\in\mathcal{L}(\mathcal{Y})$ is bounded.
\begin{assumption}
\label{assumption_4}
  For each dataset $X$, the operator $K_{XX}$ is invertible. Equivalently, there exists a real number $c >0$ such that for all $\bm{y} \in \mathcal{Y}^m$, 
  \[
  \|K_{XX}\bm{y}\|_{\mathcal{Y}^m}\geq c\|\bm{y}\|_{\mathcal{Y}^m}.
  \]
\end{assumption}
\begin{remark}
    In the scalar-valued case, $K_{XX}$ corresponds to the kernel Gram matrix, which is invertible whenever the kernel is strictly positive definite and the input points are pairwise distinct. In contrast, for operator-valued kernels, invertibility of $K_{XX}$ is not guaranteed in general. Nevertheless, assuming invertibility is common in the literature~\cite{micchelli2005learning,kadri2016operator} and is typically satisfied in practice. In particular, invertibility holds for several important classes of operator-valued kernels, such as diagonal and separable kernels~\cite{kadri2016operator}.
\end{remark}
% \begin{remark}
%     Weaker assumptions, such as functional linear independence, can also be found in the literature \cite{micchelli2005learning}. This condition assumes that $K_{XX}$ is strictly positive, which guarantees uniqueness of the solution to \eqref{eq: interpolation}. However, this assumption does not guarantee that the inverse of $K_{XX}$ exists and is generally too weak to yield sufficiently rich analytical tools for deriving meaningful performance bounds.
% \end{remark}
Under Assumption~\ref{assumption_4}, the solution to~\eqref{eq: interpolation} can be written explicitly as
\[
\bar{\bm{\alpha}} = K_{XX}^{-1}\bm{f},
\]
where $\bar{\bm{\alpha}} = (\bar{\alpha}_1,\dots, \bar{\alpha}_m)\in \mathcal{Y}^m$.
For any $x \in \mathcal X$, we define the bounded linear operator
\[
K_{Xx}:\mathcal{Y}\rightarrow\mathcal{Y}^m
\]
by
\[
    \bigl(K_{Xx} y\bigr)_i := K(x_i,x) y, \quad \forall i \in [m].
\]
We denote its adjoint by $K_{xX} := K_{Xx}^*$. With this notation, the pointwise evaluation of $\bar{s}$ can be expressed as
\[
\bar{s}(x) = K_{xX}K_{XX}^{-1}\bm{f}.
\]

Consequently, we are now ready to analyze the prediction error between $\bar{s}(x)$ and $f(x)$. Analogous bounds have been established in the scalar-valued setting~\cite{maddalena2021deterministic}. However, additional care is required with the vector-valued outputs. As shown later, the bound derived here constitutes a direct generalization of the scalar-valued case.
\begin{proposition}
\label{prop_2}
    Let Assumptions~\ref{assumption_2}-\ref{assumption_4} hold. For any $x\in\mathcal{X}$, the distance between $\bar{s}(x)$ and $f(x)$ in the $\mathcal{Y}$-norm is bounded by
    \[
    \|\bar{s}(x)-f(x)\|_{\mathcal{Y}}\leq \|P^{\frac{1}{2}}(x)\|\sqrt{\Gamma^2-\|\bar{s}\|_{\mathcal{H}}^2},
    \]
    where $P^{\frac{1}{2}}(x)$ is the positive square root of $P(x) = K(x,x)-K_{xX}K_{XX}^{-1}K_{Xx}$, $\|\bar{s}\|_{\mathcal{H}}^2 = \langle \bm{f},K_{XX}^{-1}\bm{f}\rangle_{\mathcal{Y}^m}$.
\end{proposition}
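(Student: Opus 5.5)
The plan is to bound $\langle y,\bar s(x)-f(x)\rangle_{\mathcal Y}$ for an arbitrary $y\in\mathcal Y$ and then take the supremum over unit vectors. This mirrors the scalar argument of~\cite{maddalena2021deterministic}.

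\textbf{Step 1: orthogonal decomposition.} Let $V:=\operatorname{span}\{\Phi(x_i)z : i\in[m],\, z\in\mathcal Y\}$. This subspace is closed, being the range of the bounded map $\bm z\mapsto\sum_i\Phi(x_i)z_i$, which has closed range because $K_{XX}$ is invertible. By the representer form~\eqref{representer}, $\bar s=\sum_i\Phi(x_i)\bar\alpha_i\in V$. Since $\bar s(x_i)=f(x_i)$ for all $i$, the reproducing property gives
\[
\langle \Phi(x_i)z,f-\bar s\rangle_{\mathcal H}=\langle z,f(x_i)-\bar s(x_i)\rangle_{\mathcal Y}=0 ,
\]
so $f-\bar s\perp V$. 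Pythagoras then yields $\|f-\bar s\|_{\mathcal H}^2=\|f\|_{\mathcal H}^2-\|\bar s\|_{\mathcal H}^2\le\Gamma^2-\|\bar s\|_{\mathcal H}^2$. The formula for $\|\bar s\|_{\mathcal H}^2$ comes from Proposition~\ref{prop_1}: expanding gives $\|\bar s\|^2_{\mathcal H}=\sum_{i,j}\langle\bar\alpha_i,K(x_i,x_j)\bar\alpha_j\rangle=\langle\bar{\bm\alpha},K_{XX}\bar{\bm\alpha}\rangle=\langle\bm f,K_{XX}^{-1}\bm f\rangle$, using $\bar{\bm\alpha}=K_{XX}^{-1}\bm f$ and self-adjointness of $K_{XX}$, which follows from $K(x,t)=K(t,x)^*$.

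\textbf{Step 2: projection of the evaluation representer.} For fixed $y$,
\[
\langle y,f(x)-\bar s(x)\rangle_{\mathcal Y}=\langle \Phi(x)y,f-\bar s\rangle_{\mathcal H}=\langle (I-\Pi_V)\Phi(x)y,f-\bar s\rangle_{\mathcal H},
\]
where $\Pi_V$ is the orthogonal projection onto $V$; the second equality holds because $f-\bar s\perp V$. Cauchy--Schwarz bounds the right-hand side by $\|(I-\Pi_V)\Phi(x)y\|_{\mathcal H}\,\|f-\bar s\|_{\mathcal H}$.

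\textbf{Step 3: identify the residual norm.} We need $\|(I-\Pi_V)\Phi(x)y\|^2_{\mathcal H}=\langle y,P(x)y\rangle_{\mathcal Y}$. Write $\Pi_V\Phi(x)y=\sum_i\Phi(x_i)\beta_i$. The normal equations are $\langle\Phi(x_j)z,\Phi(x)y-\sum_i\Phi(x_i)\beta_i\rangle=0$ for all $z$ and $j$, i.e. $K(x_j,x)y=\sum_iK(x_j,x_i)\beta_i$, so $\bm\beta=K_{XX}^{-1}K_{Xx}y$. Then
\[
\|\Phi(x)y\|^2-\|\Pi_V\Phi(x)y\|^2=\langle y,K(x,x)y\rangle-\langle K_{Xx}y,K_{XX}^{-1}K_{Xx}y\rangle=\langle y,P(x)y\rangle .
\]
This also shows $P(x)\succeq0$, so its positive square root exists and $\langle y,P(x)y\rangle=\|P^{1/2}(x)y\|^2\le\|P^{1/2}(x)\|^2\|y\|^2$. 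Taking the supremum over $\|y\|_{\mathcal Y}=1$ gives the claimed bound.

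\textbf{Main obstacle.} The delicate part is Step 3 in infinite dimensions. One has to justify that the projection onto $V$ has exactly the form $\sum_i\Phi(x_i)\beta_i$, which requires $V$ to be closed, guaranteed by Assumption~\ref{assumption_4}. One must also carefully identify the adjoint $K_{xX}\bm z=\sum_iK(x,x_i)z_i$ using $K(x_i,x)^*=K(x,x_i)$, so that the cross terms match $K_{xX}K_{XX}^{-1}K_{Xx}$.
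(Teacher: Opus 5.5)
Your proof is correct, and it takes a genuinely different route from the paper's. The paper augments the data with $(x,f(x))$ and writes $\|\bar{s}_{+}\|_{\mathcal H}^2$ through the inverse of the $(m+1)$-block Gram operator. A block Schur complement then gives the exact identity $\|\bar{s}_{+}\|_{\mathcal H}^2=\|\bar{s}\|_{\mathcal H}^2+\|P^{-\frac{1}{2}}(x)(\bar{s}(x)-f(x))\|_{\mathcal Y}^2$. Minimality of $\bar{s}_{+}$ yields $\|\bar{s}_{+}\|_{\mathcal H}\le\|f\|_{\mathcal H}\le\Gamma$, and the factorization $\bar{s}(x)-f(x)=P^{\frac{1}{2}}(x)P^{-\frac{1}{2}}(x)(\bar{s}(x)-f(x))$ finishes the argument. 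That route needs invertibility of the augmented Gram operator on $\mathcal Y^{m+1}$ and of $P(x)$, i.e. Assumption~\ref{assumption_4} applied to $X\cup\{x\}$. This silently requires $x\notin X$: for $x=x_k$ one has $P(x)=0$, so the claim is trivial there but the proof as written does not cover it.

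Your argument uses only $f-\bar{s}\perp V$, Pythagoras, and Cauchy--Schwarz against the residual of the evaluation representer $\Phi(x)y$. It needs only invertibility of the original $K_{XX}$ and treats every $x$ uniformly. It also \emph{proves} $P(x)\succeq 0$, rather than presupposing that $P^{\frac{1}{2}}(x)$ and $P^{-\frac{1}{2}}(x)$ exist. Nothing is lost in sharpness: your intermediate inequality $|\langle y,\bar{s}(x)-f(x)\rangle_{\mathcal Y}|\le\|P^{\frac{1}{2}}(x)y\|_{\mathcal Y}\sqrt{\Gamma^2-\|\bar{s}\|_{\mathcal H}^2}$ for all $y$ is the support-function form of the paper's ellipsoidal bound $\|P^{-\frac{1}{2}}(x)(\bar{s}(x)-f(x))\|_{\mathcal Y}\le\sqrt{\Gamma^2-\|\bar{s}\|_{\mathcal H}^2}$. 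The paper's route additionally interprets that ellipsoid as the norm increment needed to interpolate at $x$ as well.

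The closedness of $V$ that you flag as the main obstacle can be bypassed entirely. Since $f-\bar{s}\perp\sum_i\Phi(x_i)\beta_i$ for \emph{every} $\bm\beta\in\mathcal Y^m$, you may simply take $\bm\beta=K_{XX}^{-1}K_{Xx}y$ and compute $\|\Phi(x)y-\sum_i\Phi(x_i)\beta_i\|_{\mathcal H}^2=\langle y,P(x)y\rangle_{\mathcal Y}$ directly, with no projection theorem. Your justification is nonetheless valid: the synthesis map $\bm z\mapsto\sum_i\Phi(x_i)z_i$ has Gram operator $K_{XX}$ and is therefore bounded below.
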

\begin{proof}
    Given any fixed $x \in \mathcal X$ and the corresponding output $f(x)$, we denote by $\bar{s}_{+}$ the minimum-norm interpolant of the augmented dataset $\mathcal D \cup \{(x, f(x))\}$. That is, $\bar{s}_{+} \in \mathcal H$ interpolates both the original dataset and the additional point $(x,f(x))$. 
    The RKHS norm of $\bar{s}_{+}$ can then be computed explicitly by
    % \begin{align*}
    %     &\|\bar{s}_{+}\|_{\mathcal{H}}^2\\
    %     &= \bigg\langle \begin{pmatrix}
    %         \bm{f},\\ f(x)
    %     \end{pmatrix},
    %     \begin{bmatrix}
    %         K_{XX},&K_{Xx}\\
    %         K_{xX},&K_{xx}
    %     \end{bmatrix}^{-1}
    %     \begin{pmatrix}
    %         \bm{f},\\ f(x)
    %     \end{pmatrix}\bigg \rangle_{\mathcal{Y}^{m+1}}\\
    %     &\overset{(1)}{=}\bigg\langle \begin{pmatrix}
    %         \bm{f},\\ f(x)
    %     \end{pmatrix},
    %     \begin{bmatrix}
    %         K_{XX}^{-1},&0\\
    %         0,&0
    %     \end{bmatrix}
    %     \begin{pmatrix}
    %         \bm{f},\\ f(x)
    %     \end{pmatrix}\bigg \rangle_{\mathcal{Y}^{m+1}}\\
    %     &+\bigg\langle \!\begin{pmatrix}
    %         \bm{f},\\ f(x)
    %     \end{pmatrix}\!,\!
    %     \begin{bmatrix}
    %         K_{XX}^{-1}\!K_{Xx}\!P^{-\frac{1}{2}}(x)\\
    %         -P^{-\frac{1}{2}}(x)
    %     \end{bmatrix}
    %     \begin{bmatrix}
    %         K_{XX}^{-1}\!K_{Xx}\!P^{-\frac{1}{2}}(x)\\
    %         -P^{-\frac{1}{2}}(x)
    %     \end{bmatrix}^*\!
    %     \begin{pmatrix}
    %         \bm{f},\\ f(x)
    %     \end{pmatrix}\!\bigg \rangle_{\mathcal{Y}^{m\!+\!1}}\\
    %     &=\|\bar{s}\|_{\mathcal{H}}^2 + \|P^{-\frac{1}{2}}(x)\big(\bar{s}(x)-f(x)\big)\|_{\mathcal{Y}}^2,
    % \end{align*}
    \begin{align*}
        \|\bar{s}&_{+}\|_{\mathcal{H}}^2\\
        &= \bigg\langle \begin{bmatrix}
            \bm{f}\\ f(x)
        \end{bmatrix},
        \begin{bmatrix}
            K_{XX},&K_{Xx}\\
            K_{xX},&K_{xx}
        \end{bmatrix}^{-1}
        \begin{bmatrix}
            \bm{f}\\ f(x)
        \end{bmatrix}\bigg \rangle_{\mathcal{Y}^{m+1}}\\
        &\overset{(1)}{=}\bigg\langle \begin{bmatrix}
            \bm{f}\\ f(x)
        \end{bmatrix},
        \begin{bmatrix}
            K_{XX}^{-1},&0\\
            0,&0
        \end{bmatrix}
        \begin{bmatrix}
            \bm{f}\\ f(x)
        \end{bmatrix}\bigg \rangle_{\mathcal{Y}^{m+1}}\\
        &\quad+\bigg\|
        \begin{bmatrix}
            K_{XX}^{-1}\!K_{Xx}\!P^{-\frac{1}{2}}(x)\\
            -P^{-\frac{1}{2}}(x)
        \end{bmatrix}^*\!
        \begin{bmatrix}
            \bm{f}\\ f(x)
        \end{bmatrix}\!\bigg \|_{\mathcal{Y}}^2\\
        &=\|\bar{s}\|_{\mathcal{H}}^2 + \|P^{-\frac{1}{2}}(x)\big(\bar{s}(x)-f(x)\big)\|_{\mathcal{Y}}^2,
    \end{align*}
    where in (1), we apply the Schur complement for the block operator matrix. Under Assumption~\ref{assumption_4}, the operator $P(x)$ is bounded below and therefore invertible. As a consequence, the square root operator $P^{\frac{1}{2}}(x)$ exists and is also invertible. 
    
    Since $\|\bar{s}_{+}\|_{\mathcal{H}}^2\leq\|f\|_{\mathcal{H}}^2\leq\Gamma^2$, we have
    \begin{align*}
        \|\bar{s}(x)-f(x)\|_{\mathcal{Y}} &= \|P^{\frac{1}{2}}(x)P^{-\frac{1}{2}}(x)\big(\bar{s}(x)-f(x)\big)\|_{\mathcal{Y}}\\
        &\leq\|P^{\frac{1}{2}}(x)\|\|P^{-\frac{1}{2}}(x)\big(\bar{s}(x)-f(x)\big)\|_{\mathcal{Y}}\\
        &\leq\|P^{\frac{1}{2}}(x)\|\sqrt{\Gamma^2-\|\bar{s}\|_{\mathcal{H}}^2}\qedhere
    \end{align*}
\end{proof}
From Proposition~\ref{prop_2}, we obtain a bound on the prediction error measured in the $\mathcal Y$-norm. When $\mathcal Y = \mathbb{R}$, we recover the same bound as in~\cite[Proposition~1]{maddalena2021deterministic}. The operator $P(x)$ reduces to a scalar, which coincides with the posterior variance at $x$ in the Gaussian process framework. When $\mathcal{Y}$ is a function space, such a bound does not, in general, imply a pointwise bound on the difference between function values. For instance, this arises when $\mathcal{Y}$ is the Hilbert space of square-integrable functions. However, in more structured output spaces, such as when $\mathcal{Y}$ is a scalar-valued RKHS, boundedness in norm implies uniform boundedness.

\subsection{Kernel Ridge Regression}
As in the interpolation problem~\eqref{eq_objective}, the classical scalar-valued representer theorem does not directly apply for problem \eqref{eq_KRR}. Alternative derivations for the vector-valued setting are provided in~\cite{kadri2016operator,micchelli2005learning}. The solution admits the same representer form as in~\eqref{representer}, where $\alpha^*$ is determined by the linear system
\[
(K_{XX} + \lambda I)\bm{\alpha}^* = \bm{y},
\]
where we denote $(y_1,\dots,y_m)\in\mathcal{Y}^m$ and $(\alpha_1^*,\dots,\alpha_m^*)\in\mathcal{Y}^m$ by $\bm{y}$ and $\bm{\alpha}^*$, respectively. We denote the identity operator by $I:\mathcal{Y}^m\rightarrow\mathcal{Y}^m$. 

The operator $K_{XX}+\lambda I$ is directly invertible as $K_{XX}\in\mathcal{L}_{+}(\mathcal{Y}^m)$. Consequently, the solution of \eqref{eq_KRR} admits the explicit form
\begin{equation}
\label{eq_KRR_sol}
    s^*(x)=K_{xX}\bm{\alpha}^*,
\end{equation}
where $\bm{\alpha}^*=(K_{XX}+\lambda I)^{-1}\boldsymbol{y}$.

With this formulation, an upper bound on the prediction error
$\|s^*(x) - f(x)\|_{\mathcal Y}$ for any $x \in \mathcal X$ can be derived,
as summarized in the following theorem.
\begin{theorem}
\label{theorem_1}
    Let Assumptions~\ref{assumption_1}-\ref{assumption_4} hold. For any $x \in \mathcal X$, the error between the vector-valued KRR model and the ground-truth output satisfies
    \begin{align*}
        \|s^*(x)-f(x)\|_{\mathcal{Y}}&\leq \|P^{\frac{1}{2}}(x)\|\sqrt{\Gamma^2 + \Delta -\|\hat{s}\|_{\mathcal{H}}^2}+\nabla\\
        &\quad+\Big\|K_{xX}\big((K_{XX}+\lambda I)^{-1}-K_{XX}^{-1}\big)\bm{y}\Big\|_{\mathcal{Y}},
    \end{align*}
    where $\hat{s}$ denotes the minimum-norm interpolant of the noisy measurements, i.e. $\hat{s}(x) = K_{xX}K_{XX}^{-1}\bm{y}$, $\Delta$ is the optimal value of 
    \begin{equation}
        \label{eq_delta_1}
    \begin{aligned}
        \max_{\boldsymbol{\delta}\in \mathcal Y^m}\quad&2\langle \bm{\delta}, K_{XX}^{-1}\bm{y}\rangle_{\mathcal{Y}^m} -  \langle \bm{\delta}, K_{XX}^{-1}\bm{\delta}\rangle_{\mathcal{Y}^m},\\
        \text{s.t.}\quad&\|\delta_i\|_{\mathcal{Y}}\leq \bar{\delta},\quad \forall i \in [m],
    \end{aligned}
    \end{equation}
    and $\nabla$ is the optimal value of
    \begin{equation}
    \label{eq_delta_2}
        \begin{aligned}
            \max_{\boldsymbol{\delta}\in \mathcal Y^m}\quad&\|K_{xX}K_{XX}^{-1}\bm{\delta}\|_{\mathcal{Y}},\\
            \text{s.t.}\quad&\|\delta_i\|_{\mathcal{Y}}\leq \bar{\delta},\quad \forall i \in [m],
        \end{aligned}
    \end{equation}
    with $\bm{\delta} = (\delta_1,\dots,\delta_m)\in \mathcal{Y}^m$.
\end{theorem}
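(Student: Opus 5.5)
The plan is to split the error with the triangle inequality into three pieces matching the three terms of the bound:
\[
s^*(x)-f(x) = \big(s^*(x)-\hat{s}(x)\big) + \big(\hat{s}(x)-\bar{s}(x)\big) + \big(\bar{s}(x)-f(x)\big).
\]
Here $\hat{s}(x)=K_{xX}K_{XX}^{-1}\bm{y}$ is the noisy minimum-norm interpolant and $\bar{s}(x)=K_{xX}K_{XX}^{-1}\bm{f}$ the noiseless one.

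\textbf{First two pieces.} The first piece equals $K_{xX}\big((K_{XX}+\lambda I)^{-1}-K_{XX}^{-1}\big)\bm{y}$ by \eqref{eq_KRR_sol}. It therefore gives the last term of the bound directly. The second piece equals $K_{xX}K_{XX}^{-1}\bm{\delta}$ with $\bm{\delta}=\bm{y}-\bm{f}$. By Assumption~\ref{assumption_1}, this $\bm{\delta}$ is feasible for \eqref{eq_delta_2}, so its norm is at most $\nabla$.

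\textbf{Third piece.} I will apply Proposition~\ref{prop_2}, which gives
\[
\|\bar{s}(x)-f(x)\|_{\mathcal{Y}}\le \|P^{\frac12}(x)\|\sqrt{\Gamma^2-\|\bar{s}\|_{\mathcal{H}}^2}.
\]
It remains to show $\|\bar{s}\|_{\mathcal{H}}^2\ge \|\hat{s}\|_{\mathcal{H}}^2-\Delta$. Proposition~\ref{prop_2} gives $\|\bar{s}\|_{\mathcal{H}}^2=\langle \bm{f},K_{XX}^{-1}\bm{f}\rangle$. Substituting $\bm{f}=\bm{y}-\bm{\delta}$ and using self-adjointness of $K_{XX}^{-1}$ (inherited from $K(x,t)=K(t,x)^*$), this expands to
\[
\|\bar{s}\|_{\mathcal{H}}^2 = \langle \bm{y},K_{XX}^{-1}\bm{y}\rangle -2\langle\bm{\delta},K_{XX}^{-1}\bm{y}\rangle+\langle\bm{\delta},K_{XX}^{-1}\bm{\delta}\rangle .
\]
The first term is $\|\hat{s}\|_{\mathcal{H}}^2$. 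The last two terms are minus the objective of \eqref{eq_delta_1} evaluated at the true noise. Since the true noise is feasible, that objective is at most $\Delta$, and hence $\Gamma^2-\|\bar{s}\|_{\mathcal{H}}^2\le \Gamma^2+\Delta-\|\hat{s}\|_{\mathcal{H}}^2$. Monotonicity of the square root then gives the first term of the bound. Note that the quantity under the root is automatically nonnegative, since it dominates $\Gamma^2-\|\bar s\|^2_{\mathcal H}\ge 0$.

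\textbf{Main obstacle.} The one step needing care is justifying $\|\hat{s}\|_{\mathcal{H}}^2=\langle\bm{y},K_{XX}^{-1}\bm{y}\rangle$. This follows from the representer form \eqref{representer} together with the reproducing property in Proposition~\ref{prop_1}: $\|\sum_i\Phi(x_i)\alpha_i\|_{\mathcal{H}}^2=\langle\bm{\alpha},K_{XX}\bm{\alpha}\rangle$ with $\bm{\alpha}=K_{XX}^{-1}\bm{y}$. The same identity underlies the expression for $\|\bar{s}\|_{\mathcal{H}}^2$ in Proposition~\ref{prop_2}. Beyond this, the argument is only bookkeeping, so the main risk is sign or adjoint errors in the expansion.
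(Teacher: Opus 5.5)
Your proposal is correct and follows essentially the same route as the paper: the same three-term decomposition through $\hat{s}$ (the paper writes it as $s^*(x)=\bar{s}(x)+K_{xX}K_{XX}^{-1}\bm{\delta}+K_{xX}\big((K_{XX}+\lambda I)^{-1}-K_{XX}^{-1}\big)\bm{y}$), the triangle inequality with feasibility of the true noise giving $\nabla$, Proposition~\ref{prop_2} for the interpolation error, and the expansion of $\langle\bm{f},K_{XX}^{-1}\bm{f}\rangle$ with $\bm{f}=\bm{y}-\bm{\delta}$ together with feasibility in \eqref{eq_delta_1}. Beyond the paper's proof, you also make explicit two points it leaves implicit: the identity $\|\hat{s}\|_{\mathcal{H}}^2=\langle\bm{y},K_{XX}^{-1}\bm{y}\rangle$ and the nonnegativity of the radicand.
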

Before presenting the proofs, we observe that the bound consists of three terms.
The first term has the same structure as in Proposition~\ref{prop_2}.
In fact, a tighter bound replaces
\[
\|P^{\frac{1}{2}}(x)\|\sqrt{\Gamma^2+\Delta-\|\hat{s}\|_{\mathcal H}^2}
\]
by
\[
\|P^{\frac{1}{2}}(x)\|\sqrt{\Gamma^2-\|\bar{s}\|_{\mathcal H}^2}.
\]
However, evaluating $\|\bar{s}\|_{\mathcal H}$ requires access to the ground-truth values $\boldsymbol{f}$, as indicated in Proposition~\ref{prop_2}. Hence, despite yielding a sharper estimate, this tighter expression is not computable in practice. We therefore introduce a tractable relaxation, in which the unknown quantity $\Gamma^2-\|\bar{s}\|_{\mathcal H}^2$ is upper bounded by $\Gamma^2+\Delta-\|\hat{s}\|_{\mathcal H}^2$.
The remaining two terms quantify the residual contribution due to measurement noise.

\begin{remark}[Sensitivity to $\Gamma$ and $\bar{\delta}$]
The error bound depends explicitly on the RKHS norm bound $\Gamma$ and the noise bounds $\bar{\delta}$, which in practice can be estimated conservatively from historical data. For fixed $\Delta$ and $\hat{s}$, the dependence on $\Gamma$ is monotone increasing in $\Gamma$ and becomes asymptotically linear as $\Gamma$ increases. The quantities $\Delta$ and $\nabla$ are also monotone nondecreasing with respect to $\bar{\delta}$, since increasing the noise bounds enlarges the feasible sets of~\eqref{eq_delta_1} and~\eqref{eq_delta_2}. Consequently, conservative estimates of either $\Gamma$ or $\bar{\delta}$ lead to more conservative prediction bounds.
\end{remark}

% \begin{remark}
%     In the scalar-kernel setting, the objective in~\eqref{eq_delta_2} reduces to$\left| \langle \boldsymbol{\delta}, K_{XX}^{-1} K_{Xx} \rangle_{\mathbb{R}^m} \right|$. In this case, the optimization problem admits an explicit solution given by $\left\langle \bar{\boldsymbol{\delta}}, \, \bigl| K_{XX}^{-1} K_{Xx} \bigr| \right\rangle_{\mathbb{R}^m}$, where $\bar{\boldsymbol{\delta}} = [\bar{\delta},\dots,\bar{\delta}]^{\top}$. This coincides with the formulation presented in~\cite{maddalena2021deterministic}. In contrast, when a vector-valued kernel is employed, the problem no longer admits a closed-form solution due to the additional structure induced by the output Hilbert space.
% \end{remark}

\begin{proof}
    We first express $s^*(x)$ in terms of the interpolation model $\bar{s}(x)$. Recalling that $\boldsymbol{y}=\boldsymbol{f}+\boldsymbol{\delta}$, we have
    \begin{align*}
        s^*(x) &= K_{xX}K_{XX}^{-1}\bm{y}\\
        &\quad+ K_{xX}(K_{XX}+\lambda I)^{-1}\bm{y}\!-\!K_{xX}K_{XX}^{-1}\bm{y}\\
        &= \bar{s}(x) + K_{xX}K_{XX}^{-1}\bm{\delta}\\
        &\quad+ K_{xX}\big((K_{XX}+\lambda I)^{-1}-K_{XX}^{-1}\big)\bm{y}.
    \end{align*}
    Consequently, by the triangle inequality,
    \begin{align*}
        \|s^*(x)&-f(x)\|_{\mathcal{Y}}\\
        &\leq\|\bar{s}(x) - f(x)\|_{\mathcal{Y}}+\nabla\\
        &\quad+\|K_{xX}\big((K_{XX}+\lambda I)^{-1}-K_{XX}^{-1}\big)\bm{y}\|_{\mathcal{Y}}\\
        &\overset{\text{(1)}}{\leq}\|P^{\frac{1}{2}}(x)\|\sqrt{\Gamma^2-\|\bar{s}\|_{\mathcal H}^2}+\nabla\\
        &\quad+\|K_{xX}\big((K_{XX}+\lambda I)^{-1}-K_{XX}^{-1}\big)\bm{y}\|_{\mathcal{Y}}\\
        &\overset{\text{(2)}}{\leq}\|P^{\frac{1}{2}}(x)\|\sqrt{\Gamma^2+\Delta-\|\hat{s}\|_{\mathcal H}^2}+\nabla\\
        &\quad+\|K_{xX}\big((K_{XX}+\lambda I)^{-1}-K_{XX}^{-1}\big)\bm{y}\|_{\mathcal{Y}},
    \end{align*}
    where step (1) follows from Proposition~\ref{prop_2}. In step (2), the result is obtained by expanding both $\|\bar{s}\|_{\mathcal{H}}^2$ and $\|\hat{s}\|_{\mathcal{H}}^2$ with the inner product expression and the fact that $\Delta$ is the optimal value of problem \eqref{eq_delta_1}.
\end{proof}

% \subsection{SVM (optional)}

%% file: section/5_numerical_result.tex
\section{Numerical results}
To validate the theoretical results, we conduct numerical experiments under two scenarios. 
In the first setting, we consider learning an operator whose output is a correlated two-dimensional vector. 
In the second setting, we consider learning an unknown operator from the input space to a scalar-valued RKHS.

In both cases, the input space is chosen as $\mathcal X = [0,1] \subset \mathbb{R}$, and the scalar-valued kernel $G$ is the Gaussian kernel
\begin{equation}
\label{eq: G}
    G(s,t) = \exp\!\left(-\frac{(s-t)^2}{2\ell^2}\right).
\end{equation}

The length scale $\ell$ is set to 0.1.

\subsection{Two-Dimensional Vector Output}

\begin{figure}[t]
\centering
\includegraphics[width=0.75\linewidth]{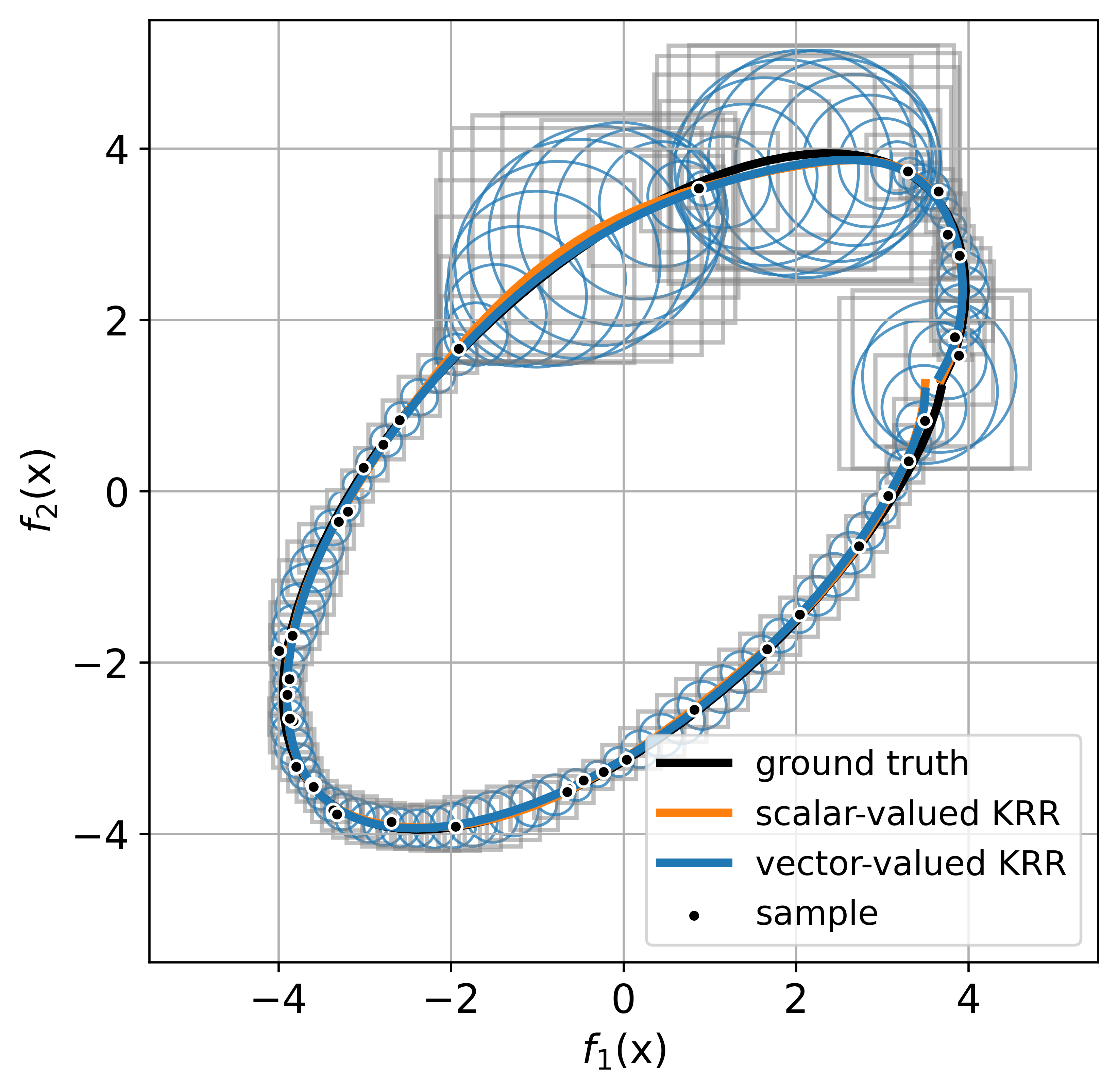}
\caption{Vector-valued KRR model compared with the ground truth in 2D. The input $x$ is the polar angle. The blue circles represent the vector-valued KRR bound, and the grey boxes represent the scalar-valued KRR bound.}
\label{fig ellipse}
\end{figure}

\begin{figure}[t]
\centering
\includegraphics[width=\linewidth]{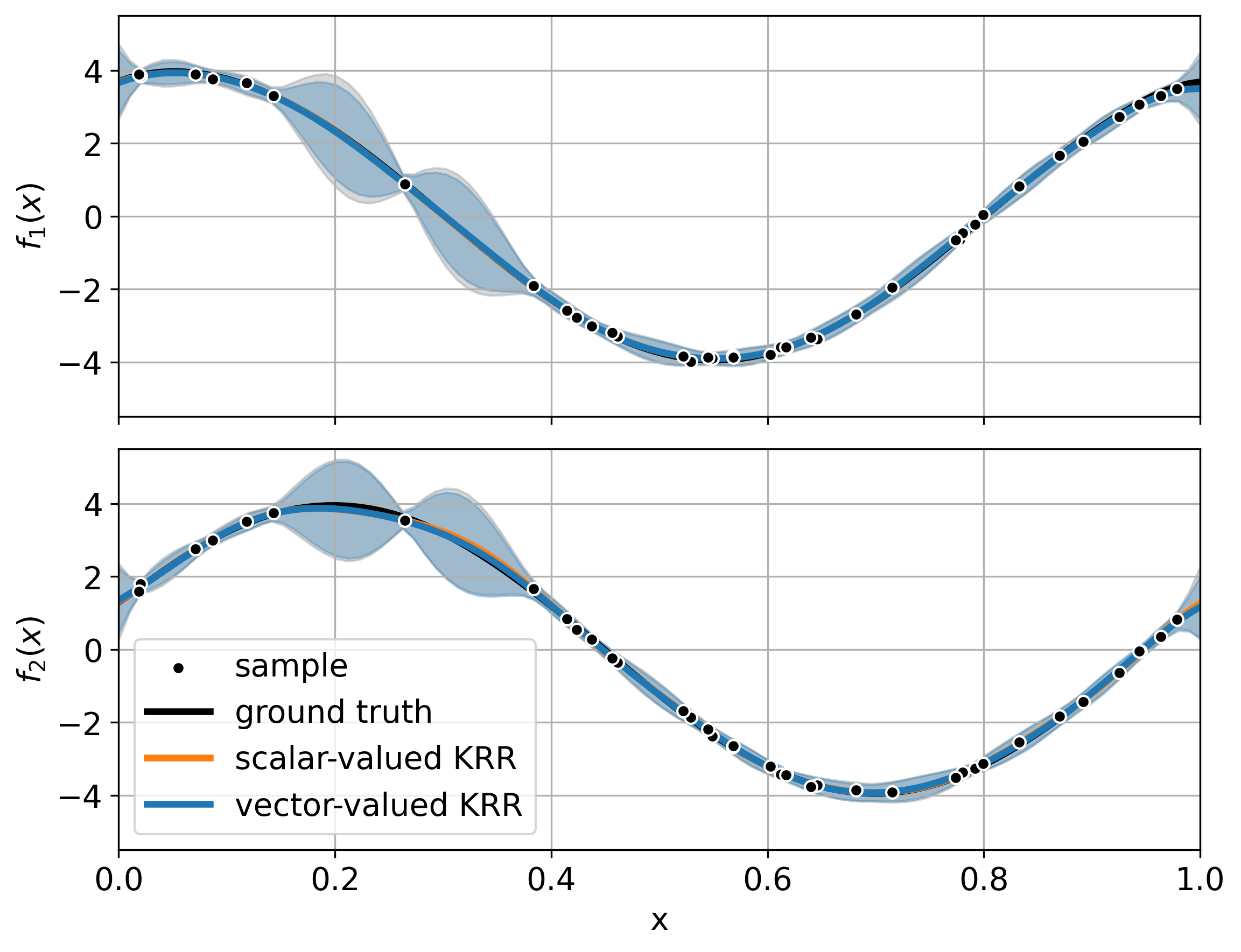}
\caption{Vector-valued KRR model compared with the ground truth in each coordinate.}
\label{fig separate}
\end{figure}
In this experiment, the ground-truth operator is defined as the weighted sum of kernels,
\[
f(x) = \sum_{i=1}^{50} G(x,x_i)\,T\,\alpha_i,
\]
where
\[
T = \Phi \Lambda \Phi^\top,
\qquad
\Lambda =
\begin{bmatrix}
1 & 0 \\
0 & 0.5
\end{bmatrix},
\]
and $\Phi$ is the rotation matrix that rotates any vector in $\mathbb{R}^2$ counterclockwise around the origin by $45^\circ$.

The coefficients $\alpha_i \in \mathbb{R}^2$ are obtained by solving the regularized least-squares problem
\begin{equation}
\label{eq_KRR_sim}
\min_{\{\alpha_i\}}
\sum_{i=1}^{50}
\|f(x_i) - y_i\|_{\mathbb{R}^2}^2
+ 0.01\,\|f\|_{\mathcal H}^2,
\end{equation}
where $x_i$ is generated by uniformly sampling from $[0,1]$ and $y_i$ are sampled from points on an ellipse
\[
y_i = T\begin{bmatrix}
    5\cos{2\pi x_i}\\
    5\sin{2\pi x_i}
\end{bmatrix}.
\]

Consequently, the operator $f$ provides a kernel representation of the ellipse, with RKHS norm $\|f\|_{\mathcal H} = 10.55$. Training data are generated by sampling from $f$ at 40 randomly selected input locations with i.i.d. noise samples $\delta_i \in \mathbb{R}^2$ satisfying $\|\delta_i\|_2 \le 0.1$. We train a vector-valued KRR model $s^*$ as formulated in~\eqref{eq_KRR}. 
For comparison, we also train two independent scalar-valued models, one for each output coordinate, using the same dataset. The learned model and the uncertainty bound are illustrated in Fig. \ref{fig ellipse}.

In Fig.~\ref{fig ellipse}, $f$ is shown as the black solid curve, while $s^*$ is plotted as the blue curve. The model obtained by training independent scalar-valued KRR models for each output dimension is shown in orange.
The black dots represent the training data. Both the vector-valued and scalar-valued KRR models produce very similar mean predictions. This is expected since they are trained on the same dataset. However, these two approaches produce fundamentally different uncertainty bounds. For each input, the vector-valued KRR model provides an upper bound on the prediction error in terms of the $2$-norm. This bound is illustrated by the blue circle in Fig.~\ref{fig ellipse}. In contrast, the scalar-valued models provide independent bounds for each coordinate.
These bounds form a box constraint in the output space, shown as the grey box, which is generally more conservative.

The model predictions and their corresponding bounds projected onto each output dimension are shown in Fig.~\ref{fig separate}. Although the scalar-valued bounds appear slightly more conservative than those derived from the vector-valued model, the difference is not substantial in this example. This is because the bounds are obtained by projecting the norm bound onto each coordinate, which represents a worst-case scenario. For example, at $x=0.2$, the bounds for both $f_1$ and $f_2$ appear large. However, they cannot simultaneously attain their maximal values without violating the overall norm constraint. A large value in one coordinate necessarily implies a smaller value in the other. This coupling between outputs is captured by the vector-valued KRR model, whereas the scalar-valued KRR model fails to account for such dependencies.

\subsection{Scalar-Valued RKHS Output}

% \begin{figure}[t]
% \centering
% \includegraphics[width=0.7\linewidth]{picture/3D.png}
% \caption{Vector-valued KRR with function output.}
% \label{fig 3d}
% \end{figure}
% \begin{figure}[t]
%     \subfloat[Model prediction.\label{fig 3d model}]{
%        \includegraphics[width=0.48\linewidth]{picture/3D_model.png}}
%     \hfill
%     \subfloat[Error.\label{fig 3d error}]{
%         \includegraphics[width=0.48\linewidth]{picture/3D_error.png}}
%     \caption{Vector-valued KRR model with function output.}
%     \label{fig 3D main}
% \end{figure}

\begin{figure}[t]
\centering
\includegraphics[width=\linewidth]{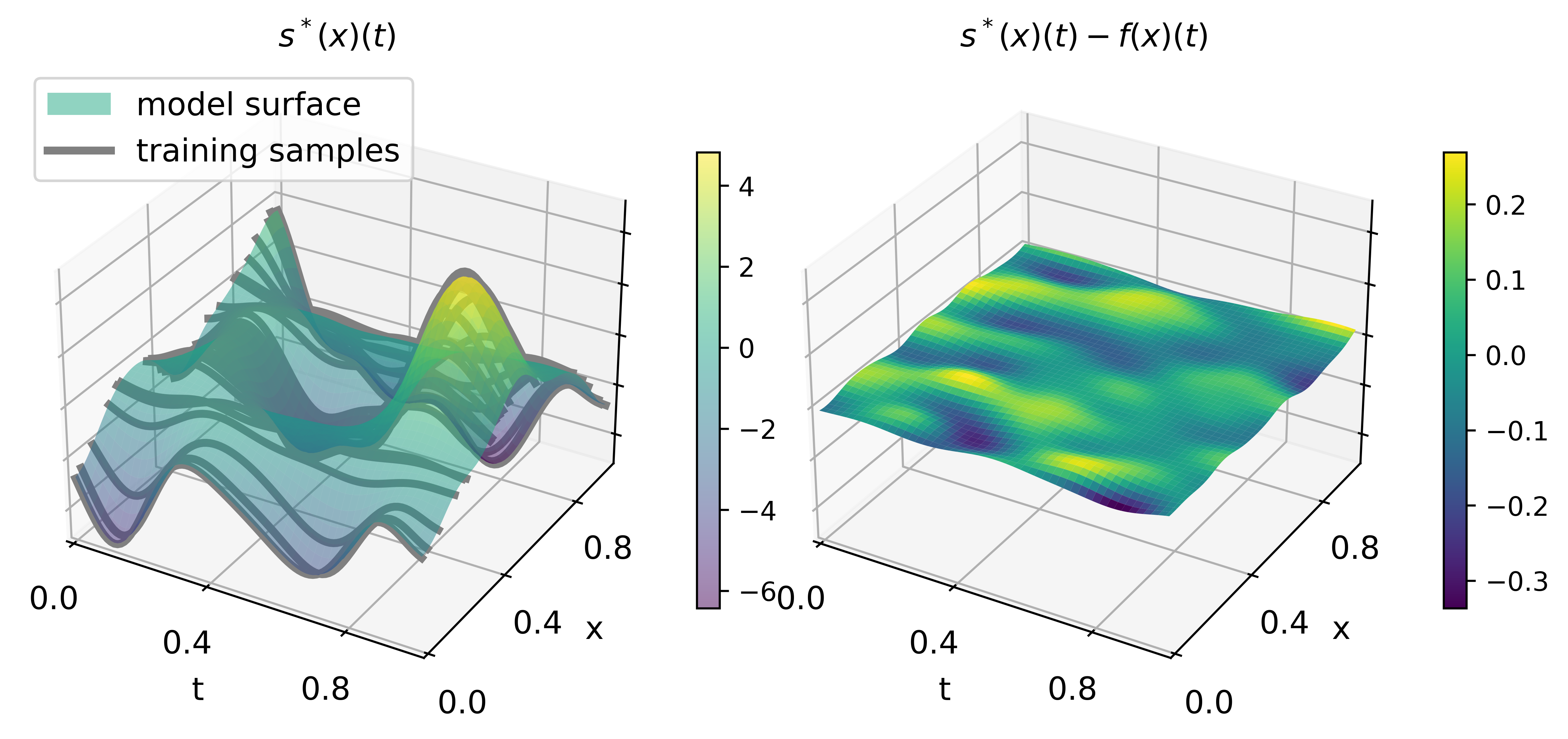}
\caption{Vector-valued KRR model with function output.}
\label{fig 3D main}
\end{figure}

\begin{figure}[t]
\centering
\includegraphics[width=\linewidth]{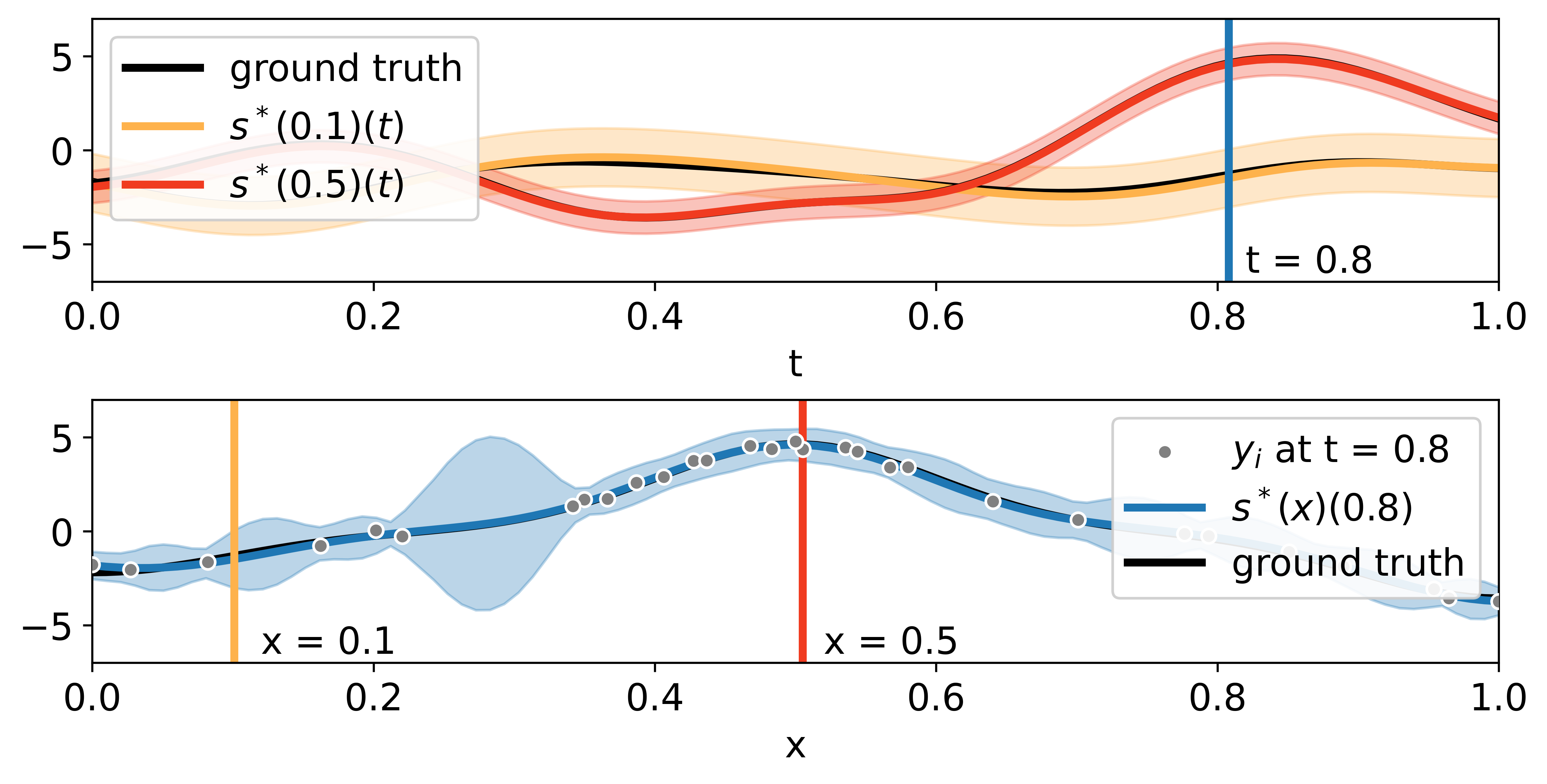}
\caption{Vector-valued KRR model and error bound in each coordinate.}
\label{fig tx}
\end{figure}

We consider the problem of learning a nonlinear operator where the outputs are scalar-valued functions. We assume that the output functions belong to the scalar-valued RKHS induced by \eqref{eq: G}.

The ground-truth operator is defined as
\[
f(x) = \sum_{i=1}^{N} G(x,x_i)\,\alpha_i,
\]
where, in the case, $T$ is the identity operator and $\alpha_i$ lies in the scalar-valued RKHS. We represent $\alpha_i(t) = \sum_{j=1}^{9} G(t,t_j)\,\beta_{ij}$, where $\{t_j\}_{j=1}^{9}$ are points sampled from a uniform grid on $[0,1]$, and the coefficients $\beta_{ij}$ are generated independently from the uniform distribution on $[-5,5]$. The resulting RKHS norm of the ground-truth operator is $\|f\|_{\mathcal H}=13.49$.

We randomly collect $30$ inputs $\{x_i\}_{i=1}^{30}$ and sample $y_i = f(x_i) + \delta_i$ from the ground-truth operator with additive noise $\delta_i$ satisfying $\|\delta_i\|_{\mathcal H}\le 0.5$ for all $i\in[30]$. Here, we restrict $\delta$ and $\alpha$ to be based on the fixed kernel centers $\{t_j\}_{j=1}^9$. However, in general, they can be represented using their own set of kernel centers. The vector-valued KRR model is then trained with $\lambda=0.01$.

The learned model $s^*$ and the corresponding samples are illustrated in the left plot in Fig.~\ref{fig 3D main}. Here, $s^*$ is a surface, while the noisy function samples are shown as grey curves along the $t$ axis. We conclude that $s^*$ successfully learns the ground truth from the function samples and provides a smooth interpolation over the entire input domain. This is further demonstrated by the prediction error plotted on the right plot in Fig.~\ref{fig 3D main}.

To illustrate the norm bound on the prediction error, we fix one coordinate and plot the model prediction along both the $t$-axis and the $x$-axis in Fig.~\ref{fig tx}. We denote the function-valued output by $s^*(x)\in \mathcal{Y}$ and its pointwise evaluation by $s^*(x)(t)\in\mathbb{R}$. In the upper plot, the function values $s^*(0.1)(t)$ and $s^*(0.5)(t)$ are shown for $t \in [0,1]$. The ground-truth output, indicated by the black solid line, lies within the derived error bounds (shaded region). It can be observed that the bound is uniform with respect to $t$. This is because the measurements are function-valued and the derived bound is expressed in terms of the scalar-valued RKHS norm. In such spaces, a bounded norm implies that the function values are uniformly bounded.

In the lower plot, the function values $s^*(x)(0.8)$ are shown for $x \in [0,1]$. The pointwise evaluations of $y_i$ at $t = 0.8$ are marked by the grey dots. The error bound is shown as the transparent area. As expected, it becomes tighter in regions where the samples are dense and more conservative in regions where the samples are sparse. Similar to the two-dimensional case, for each input $x$, the bound can be interpreted as the projection of a high-dimensional norm ball onto a one-dimensional space, which represents a worst-case scenario. In particular, a larger deviation at $t = 0.8$ implies a smaller deviation at other locations $t \in [0,1]\setminus\{0.8\}$. 

Overall, this framework provides a principled way to handle general vector-valued measurements, a setting that cannot be addressed using existing scalar-valued learning frameworks.

%% file: section/6_conclusion.tex
\section{Conclusion}
In this work, we study a generalized nonlinear operator learning problem in which the outputs lie in arbitrary Hilbert spaces, and formulate it within the framework of vector-valued KRR. By assuming that the unknown operator belongs to a vector-valued RKHS and that the measurement noise has bounded norm, we derive a deterministic bound on the discrepancy between the vector-valued KRR estimator and the ground truth. The theoretical results are further validated through simulation studies under two different scenarios. The proposed approach provides a framework for robust learning of nonlinear operators with generalized vector-valued outputs. Future work includes extending the framework to consider more complex observation models, such as measurements obtained through linear operators, and applying the derived bounds to controller design problems, such as robust MPC.

% \red{add what kind of problem we try to solve}

% \red{adjust the logic of error bound and efficient computation logic}

% \red{Which bound is tighter? $P_{\lambda}(x)\Gamma + K_{xX}(K_{XX}+\lambda I)^{-1}\bm{\delta}$}

%% file: main.bbl
% Generated by IEEEtran.bst, version: 1.14 (2015/08/26)
\begin{thebibliography}{10}
\providecommand{\url}[1]{#1}
\csname url@samestyle\endcsname
\providecommand{\newblock}{\relax}
\providecommand{\bibinfo}[2]{#2}
\providecommand{\BIBentrySTDinterwordspacing}{\spaceskip=0pt\relax}
\providecommand{\BIBentryALTinterwordstretchfactor}{4}
\providecommand{\BIBentryALTinterwordspacing}{\spaceskip=\fontdimen2\font plus
\BIBentryALTinterwordstretchfactor\fontdimen3\font minus \fontdimen4\font\relax}
\providecommand{\BIBforeignlanguage}[2]{{%
\expandafter\ifx\csname l@#1\endcsname\relax
\typeout{** WARNING: IEEEtran.bst: No hyphenation pattern has been}%
\typeout{** loaded for the language `#1'. Using the pattern for}%
\typeout{** the default language instead.}%
\else
\language=\csname l@#1\endcsname
\fi
#2}}
\providecommand{\BIBdecl}{\relax}
\BIBdecl

\bibitem{pillonetto2014kernel}
G.~Pillonetto, F.~Dinuzzo, T.~Chen, G.~De~Nicolao, and L.~Ljung, ``Kernel methods in system identification, machine learning and function estimation: A survey,'' \emph{Automatica}, vol.~50, no.~3, pp. 657--682, 2014.

\bibitem{10266828}
A.~Carè, R.~Carli, A.~D. Libera, D.~Romeres, and G.~Pillonetto, ``Kernel methods and {G}aussian processes for system identification and control: A road map on regularized kernel-based learning for control,'' \emph{IEEE Control Systems Magazine}, vol.~43, no.~5, pp. 69--110, 2023.

\bibitem{pillonetto2024kernel}
G.~Pillonetto and M.~Bisiacco, ``Kernel-based linear system identification: When does the representer theorem hold?'' \emph{Automatica}, vol. 159, p. 111347, 2024.

\bibitem{dinuzzo2015kernels}
F.~Dinuzzo, ``Kernels for linear time invariant system identification,'' \emph{SIAM Journal on Control and Optimization}, vol.~53, no.~5, pp. 3299--3317, 2015.

\bibitem{lian2021nonlinear}
Y.~Lian and C.~N. Jones, ``Nonlinear data-enabled prediction and control,'' in \emph{Learning for Dynamics and Control}.\hskip 1em plus 0.5em minus 0.4em\relax PMLR, 2021, pp. 523--534.

\bibitem{lian2020gaussian}
L.~Yingzhao and C.~Jones, ``On {G}aussian process based {K}oopman operators,'' vol.~53, no.~2.\hskip 1em plus 0.5em minus 0.4em\relax IFAC-Papers{O}nline, 2020, p. 52–58.

\bibitem{scampicchio2025gaussian}
A.~Scampicchio, E.~Arcari, A.~Lahr, and M.~N. Zeilinger, ``Gaussian processes for dynamics learning in model predictive control,'' \emph{Annual Reviews in Control}, vol.~60, p. 101034, 2025.

\bibitem{7330913}
F.~Berkenkamp and A.~P. Schoellig, ``Safe and robust learning control with {G}aussian processes,'' in \emph{European Control Conference}, 2015, pp. 2496--2501.

\bibitem{zhao2015distributed}
J.~Zhao and F.~D{\"o}rfler, ``Distributed control and optimization in {DC} microgrids,'' \emph{Automatica}, vol.~61, pp. 18--26, 2015.

\bibitem{nedic2018network}
A.~Nedi{\'c}, A.~Olshevsky, and M.~G. Rabbat, ``Network topology and communication-computation tradeoffs in decentralized optimization,'' \emph{Proceedings of the IEEE}, vol. 106, no.~5, pp. 953--976, 2018.

\bibitem{li2020fourier}
Z.~Li, N.~Kovachki, K.~Azizzadenesheli, B.~Liu, K.~Bhattacharya, A.~Stuart, and A.~Anandkumar, ``Fourier neural operator for parametric partial differential equations,'' \emph{arXiv preprint arXiv:2010.08895}, 2020.

\bibitem{gertheiss2024functional}
J.~Gertheiss, D.~R{\"u}gamer, B.~X. Liew, and S.~Greven, ``Functional data analysis: An introduction and recent developments,'' \emph{Biometrical Journal}, vol.~66, no.~7, p. e202300363, 2024.

\bibitem{kadri2010nonlinear}
H.~Kadri, E.~Duflos, P.~Preux, S.~Canu, and M.~Davy, ``Nonlinear functional regression: {A} functional {RKHS} approach,'' in \emph{Proceedings of the 13th International Conference on Artificial Intelligence and Statistics}.\hskip 1em plus 0.5em minus 0.4em\relax JMLR Workshop and Conference Proceedings, 2010, pp. 374--380.

\bibitem{yuan2010reproducing}
M.~Yuan and T.~T. Cai, ``A reproducing kernel {H}ilbert space approach to functional linear regression,'' \emph{The Annals of Statistics}, vol.~38, no.~6, 2010.

\bibitem{chen2016variable}
Y.~Chen, J.~Goldsmith, and R.~T. Ogden, ``Variable selection in function-on-scalar regression,'' \emph{Stat}, vol.~5, no.~1, pp. 88--101, 2016.

\bibitem{bevanda2023koopman}
P.~Bevanda, M.~Beier, A.~Lederer, S.~Sosnowski, E.~H{\"u}llermeier, and S.~Hirche, ``Koopman kernel regression,'' \emph{Advances in Neural Information Processing Systems}, vol.~36, pp. 16\,207--16\,221, 2023.

\bibitem{ruiz2024survey}
C.~Ruiz, C.~M. Ala{\'\i}z, and J.~R. Dorronsoro, ``A survey on kernel-based multi-task learning,'' \emph{Neurocomputing}, vol. 577, p. 127255, 2024.

\bibitem{caponnetto2007optimal}
A.~Caponnetto and E.~De~Vito, ``Optimal rates for the regularized least-squares algorithm,'' \emph{Foundations of Computational Mathematics}, vol.~7, no.~3, pp. 331--368, 2007.

\bibitem{audiffren2013stability}
J.~Audiffren and H.~Kadri, ``Stability of multi-task kernel regression algorithms,'' in \emph{Asian Conference on Machine Learning}.\hskip 1em plus 0.5em minus 0.4em\relax PMLR, 2013, pp. 1--16.

\bibitem{park2017remarks}
C.~Park, R.~T. Haftka, and N.~H. Kim, ``Remarks on multi-fidelity surrogates,'' \emph{Structural and Multidisciplinary Optimization}, vol.~55, no.~3, pp. 1029--1050, 2017.

\bibitem{pan2009survey}
S.~J. Pan and Q.~Yang, ``A survey on transfer learning,'' \emph{IEEE Transactions on Knowledge and Data Engineering}, vol.~22, no.~10, pp. 1345--1359, 2009.

\bibitem{liu2018remarks}
H.~Liu, J.~Cai, and Y.-S. Ong, ``Remarks on multi-output {Gaussian} process regression,'' \emph{Knowledge-Based Systems}, vol. 144, pp. 102--121, 2018.

\bibitem{kadri2009general}
H.~Kadri, E.~Duflos, M.~Davy, P.~Preux, and S.~Canu, ``General framework for nonlinear functional regression with reproducing kernel {H}ilbert spaces,'' Ph.D. dissertation, INRIA, 2009.

\bibitem{micchelli2005learning}
C.~A. Micchelli and M.~Pontil, ``On learning vector-valued functions,'' \emph{Neural Computation}, vol.~17, no.~1, pp. 177--204, 2005.

\bibitem{akhiezer1981theory}
N.~I. Akhiezer and I.~M. Glazman, \emph{Theory of linear operators in Hilbert space}.\hskip 1em plus 0.5em minus 0.4em\relax Pitman Publishing, 1981, no.~9.

\bibitem{aronszajn1950theory}
N.~Aronszajn, ``Theory of reproducing kernels,'' \emph{Transactions of the American Mathematical Society}, vol.~68, no.~3, pp. 337--404, 1950.

\bibitem{williams2006gaussian}
C.~K. Williams and C.~E. Rasmussen, \emph{Gaussian processes for machine learning}.\hskip 1em plus 0.5em minus 0.4em\relax MIT Press, Cambridge, MA, 2006, vol.~2, no.~3.

\bibitem{caponnetto2008universal}
A.~Caponnetto, C.~A. Micchelli, M.~Pontil, and Y.~Ying, ``Universal multi-task kernels,'' \emph{The Journal of Machine Learning Research}, vol.~9, pp. 1615--1646, 2008.

\bibitem{carmeli2010vector}
C.~Carmeli, E.~De~Vito, A.~Toigo, and V.~Umanit{\'a}, ``Vector valued reproducing kernel {H}ilbert spaces and universality,'' \emph{Analysis and Applications}, vol.~8, no.~01, pp. 19--61, 2010.

\bibitem{scholkopf2001generalized}
B.~Sch{\"o}lkopf, R.~Herbrich, and A.~J. Smola, ``A generalized representer theorem,'' in \emph{International Conference on Computational Learning Theory}.\hskip 1em plus 0.5em minus 0.4em\relax Springer, 2001, pp. 416--426.

\bibitem{kadri2016operator}
H.~Kadri, E.~Duflos, P.~Preux, S.~Canu, A.~Rakotomamonjy, and J.~Audiffren, ``Operator-valued kernels for learning from functional response data,'' \emph{Journal of Machine Learning Research}, vol.~17, no.~20, pp. 1--54, 2016.

\bibitem{maddalena2021deterministic}
E.~T. Maddalena, P.~Scharnhorst, and C.~N. Jones, ``Deterministic error bounds for kernel-based learning techniques under bounded noise,'' \emph{Automatica}, vol. 134, p. 109896, 2021.

\end{thebibliography}
